\documentclass[12pt,reqno]{amsart}

\usepackage[T1]{fontenc}
\usepackage[utf8]{inputenc}
\usepackage[a4paper,margin=1.15in]{geometry}
\usepackage[osf]{mathpazo}       
\usepackage{microtype}

\usepackage{amsmath,amssymb,amsthm,mathtools}
\usepackage{enumitem}
\usepackage{xcolor}
\definecolor{RoyalBlue}{RGB}{30,70,150}
\usepackage[colorlinks=true,
            linkcolor=RoyalBlue,
            citecolor=RoyalBlue,
            urlcolor=RoyalBlue]{hyperref}
\usepackage[nameinlink,capitalise]{cleveref}
\usepackage{comment}
\usepackage{tikz}
\usetikzlibrary{matrix,positioning,arrows.meta,fit,backgrounds,calc}

\setlist[itemize]{leftmargin=1.8em,itemsep=0.3em,topsep=0.3em}
\setlist[enumerate]{leftmargin=1.9em,itemsep=0.3em,topsep=0.3em}

\newtheorem{theorem}{Theorem}[section]
\newtheorem{proposition}[theorem]{Proposition}
\newtheorem{lemma}[theorem]{Lemma}
\newtheorem{corollary}[theorem]{Corollary}
\newtheorem{claim}[theorem]{Claim}

\theoremstyle{definition}

\theoremstyle{remark}

\newcommand{\R}{\mathbb{R}}

\newcommand{\Q}{\mathbb{Q}}
\newcommand{\Z}{\mathbb{Z}}
\newcommand{\F}{\mathbb{F}}
\newcommand{\OK}{\mathcal{O}_K}
\newcommand{\Norm}{\mathrm{N}}
\newcommand{\Disc}{\Delta}
\newcommand{\eps}{\varepsilon}
\newcommand{\fp}{\mathfrak{p}}

\crefname{equation}{}{}
\Crefname{equation}{}{}
\crefname{theorem}{Theorem}{Theorems}
\Crefname{theorem}{Theorem}{Theorems}
\crefname{proposition}{Proposition}{Propositions}
\Crefname{proposition}{Proposition}{Propositions}
\crefname{lemma}{Lemma}{Lemmas}
\Crefname{lemma}{Lemma}{Lemmas}
\crefname{remark}{Remark}{Remarks}
\Crefname{remark}{Remark}{Remarks}

\title[Split primes and the Elekes-R\'onyai problem]
      {Split primes and the Elekes-R\'onyai problem}
\author{Cosmin Pohoata}
\thanks{Department of Mathematics, Emory University, Atlanta, USA. Research supported by NSF grant DMS-2246659. Email: \texttt{cosmin.pohoata@emory.edu}}
\date{}

\begin{document}

\begin{abstract}
There exist an absolute constant $c>0$ and arbitrarily large finite sets $A\subset\R$
with
\[
    \left| \left\{x+y+(x-y)^2:\ x, y \in A\right\}\right| \le|A|^{2-c}.
\]
Since $x+y+(x-y)^2 \in \mathbb{R}[x,y]$ is a polynomial which is neither additive nor multiplicative, this provides a counterexample for the Elekes-R\'onyai problem. 

The proof combines two amplifications of the same local congruence defect: \emph{horizontal} amplification over squarefree products of rational primes, and \emph{vertical} amplification through bounded root-discriminant towers in which those primes split completely. In this way a fixed local density defect becomes macroscopic, producing a power saving. This phenomenon also suggests a broader mechanism for producing similar extremal constructions throughout combinatorics and number theory.
\end{abstract}

\maketitle

\section{Introduction} \label{sec:intro}

Let $f\in\R[x,y]$ be a fixed bivariate polynomial and let $A,B\subset\R$ be
finite sets, each of size $n$. The Elekes--R\'onyai problem asks how small the
image set
\[
        f(A,B)=\{\,f(a,b):a\in A,\ b\in B\,\}
\]
can be. Two families of polynomials evade expansion for trivial reasons. If $f$ is additive, in the sense that $f(x,y)=g\bigl(u(x)+v(y)\bigr)$ for univariate $g,u,v$, one may take $u(A)$ and $v(B)$ to be arithmetic progressions. If $f$ is multiplicative, in the sense that $f(x,y)=g\bigl(u(x)\,v(y)\bigr)$, then one can take geometric progressions instead. In either case $|f(A,B)|$ can be kept linear in $n$. For convenience, throughout this paper, we will sometimes refer collectively to the polynomials which are either additive or multiplicative as the {\it{special forms}}, and to polynomials which are neither additive nor multiplicative as {\it{non-special}}.

In a highly influential paper from 2000, Elekes and R\'onyai \cite{ElekesRonyai2000} proved that the additive and multiplicative special forms above are the only ways to possibly have linear-size image in the following qualitative sense: for every fixed non-special polynomial $f \in \mathbb{R}[x,y]$, the minimum possible image size of an $n\times n$ Cartesian product is always superlinear in $n$:
\[
        \min_{\substack{A,B\subset\R\\ |A|=|B|=n}}
        \frac{|f(A,B)|}{n}\longrightarrow\infty
        \qquad\text{as } n\to\infty.
\]
Equivalently, if $|f(A,B)|=O_f(n)$ for arbitrarily large Cartesian products,
then $f$ must be additive or multiplicative~\cite{ElekesRonyai2000}. Establishing quantitative versions of this phenomenon and related variants has since become a central topic at the intersection of incidence geometry and arithmetic combinatorics. In particular, a well-known conjecture of Elekes states that this qualitative superlinearity should in fact be nearly quadratic: for every $\eps>0$, there exists a positive constant $C_{f,\varepsilon} > 0$ such that every non-special polynomial satisfies
\begin{equation} \label{ER}
        |f(A,B)| \geq C_{f,\varepsilon} n^{2-\eps}\ \ \ \text{for every}\ A, B \subset \mathbb{R}\ \text{with}\ |A|=|B|=n.
\end{equation}
This conjecture is often referred to as the Elekes--R\'onyai problem. For historical background and more context, see for example the earlier paper Elekes \cite{ElekesDistinctDistances1999}, Section~4.1 of Matou\v{s}ek's book \cite{Matousek2002}, the introduction of Raz--Sharir--Solymosi
\cite[Section~1.1]{RazSharirSolymosi2016}, or the excellent survey by de Zeeuw \cite{deZeeuwSurvey}. The best known quantitative estimate towards \eqref{ER} comes from the recent work of Solymosi and Zahl \cite{SolymosiZahl2024}, who proved that for every non-special polynomial $f \in \mathbb{R}[x,y]$, one must always have that $|f(A,B)| = \Omega_{f}(n^{3/2})$, for every two sets $A,B \subset \mathbb{R}$ with $|A|=|B|=n$. This in turn improved upon the previous record of $n^{4/3}$ from \cite{RazSharirSolymosi2016}.

In this paper, we disprove the conjecture of Elekes, by establishing the following result.

\begin{theorem}
\label{thm:main}
There exist arbitrarily large finite sets $A\subset\R$ and an absolute constant
$c>0$ such that
\[
        |f(A,A)|\le |A|^{2-c},
\]
where
\[
        f(x,y)=x+y+(x-y)^2.
\]
Moreover, for every fixed $\eps>0$, the sets may be chosen so that
$|A+A|\le |A|^{1+\eps}$ while $|f(A,A)|\le |A|^{2-c_\eps}$ for some
$c_\eps>0$.
\end{theorem}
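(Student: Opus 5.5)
The plan is to realize $A$ as a set of algebraic integers in a number field $K$ of large degree $d$, embedded into $\R$ by a single real embedding, and to force $f(A,A)$ into few residue classes modulo many primes simultaneously. The starting point is the local defect already visible over $\Z$. With $s=x+y$ and $t=x-y$ we have $f=s+t^2$, and $s\equiv t\pmod 2$, so $f(x,y)\equiv 2(x+y)\equiv 0\pmod 2$ for all integers $x,y$. More generally, for a prime $p$ I would look for a subset $S_p\subset\F_p$ of density $\alpha_p$ such that $f(S_p,S_p)$ has density $\beta_p<\alpha_p^2$ in $\F_p$. A fixed such $p$ with a fixed ratio $\beta_p/\alpha_p^2=\theta<1$ suffices. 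A concrete choice is to take $S_p$ so that $x+y$ and $(x-y)^2$ are correlated, for instance through the restriction on quadratic-residue classes coming from the structure of $s+t^2$. The remaining work is to make this defect count with multiplicity growing linearly in $d$.

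\emph{Horizontal amplification.} Take a squarefree product $q=p_1\cdots p_k$ of primes with such defects. Prescribing $x\bmod p_i\in S_{p_i}$ for each $i$, the Chinese remainder theorem gives $f(A,A)\bmod q$ inside a set of density $\prod\beta_{p_i}$, while $A$ has density $\prod\alpha_{p_i}$. Over $\Z$ this alone gives no power saving, since $k$ is at most logarithmic in the box size.

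\emph{Vertical amplification.} Take $K$ in an infinite tower of totally real fields of bounded root discriminant in which every $p_i$ splits completely; such towers exist by class field tower constructions of Golod--Shafarevich type, with splitting conditions imposed as in Ihara's and Tsfasman--Vl\u{a}du\c{t}'s setups. Then each $p_i$ contributes $d$ primes of $\OK$ with residue field $\F_{p_i}$, and the defect can be imposed at all $kd$ of them. Let $A$ be the set of $x\in\OK$ whose Minkowski image lies in a box of side $B$ and whose residues at every prime above each $p_i$ lie in $S_{p_i}$. Lattice point counting gives
\[
|A|\approx B^{d}\,|\Disc_K|^{-1/2}\Bigl(\prod_i\alpha_{p_i}\Bigr)^{d}.
\]
Every value of $f$ has archimedean size $O(B^2)$ at each place and lies in the prescribed residue set, so
\[
|f(A,A)|\lesssim (CB^2)^{d}\,|\Disc_K|^{-1/2}\Bigl(\prod_i\beta_{p_i}\Bigr)^{d}.
\]
Because the root discriminant $|\Disc_K|^{1/d}$ is bounded, every loss, whether from the discriminant, from $C$, or from boundary effects, is only $e^{O(d)}$. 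The gain is the factor $\prod_i(\beta_{p_i}/\alpha_{p_i}^2)^{d}=\theta^{kd}$. Choosing $k$ (or $B$) large but fixed, depending only on the tower constants, this gain dominates the losses, and we get $|f(A,A)|\le|A|^{2-c}$ with $c$ absolute. Injectivity of a single real embedding transports everything to $\R$.

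For the ``moreover'' part, $A+A$ lies in a box of side $2B$ and in the sumset residue classes $S_p+S_p$. Choosing $B$ large compared with the tower constants makes $|A+A|\le|A|^{1+\eps}$, while the factor $\theta^{kd}$ still gives a power saving $c_\eps>0$. I expect two steps to be the main obstacle. The first is producing the tower with bounded root discriminant in which the chosen primes split completely, while keeping the root discriminant small relative to the defect $\theta^{k}$. The second is the lattice point counting, which must hold uniformly in $d$ with only $e^{O(d)}$ error.
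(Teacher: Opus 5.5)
There is a genuine gap at the first step: the local defect you ask for does not exist at any odd prime. If $S_p\subseteq\F_p$ is nonempty, the diagonal values $f(a,a)=2a$, $a\in S_p$, are pairwise distinct. Hence $|f(S_p,S_p)|\ge|S_p|\ge|S_p|^2/p$, i.e.\ $\beta_p\ge\alpha_p\ge\alpha_p^2$. The same argument works for conditions modulo prime powers, since multiplication by $2$ preserves Haar measure at primes not dividing $2$. So cutting an integral set $A\subseteq\OK$ out by local conditions at odd split primes gives no saving: your quotient $\beta_{p_i}/\alpha_{p_i}^2$ is always $\ge1$, and correlating $x+y$ with $(x-y)^2$ through quadratic-residue classes cannot get around this.

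The only defect your framework produces is the one at $p=2$ that you found: $f\equiv(x^2+x)+(y^2+y)\equiv0$ modulo each degree-one prime above $2$. This already assumes a tower in which $2$ splits completely, whereas the tower the paper uses only supplies odd primes. Even granting that, comparing the measure of $2S$ with that of $S$ shows that integral local conditions can gain at most a factor $|\Norm_{K/\Q}(2)|^{-1}=2^{-d}$ in total. That cannot beat the loss in your own estimate, $|f(A,A)|/|A|^2\approx C^d\,\Disc_K^{1/2}\cdot(\text{local gain})$ with $C\ge1$, because Minkowski's bound gives $\Disc_K^{1/2}\ge d^d/d!=(e+o(1))^d$ for totally real $K$, and $e>2$. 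Enlarging $B$ does not help either, since this ratio does not depend on $B$.

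The missing idea is to allow denominators, which removes the diagonal obstruction. The paper fixes odd primes $p_1,\dots,p_t\in\mathcal P$ splitting completely in every layer of the Hajir--Maire--Ramakrishna tower and sets $Q=p_1\cdots p_t$. It then takes $A=Q^{-1}\sigma_1(B_K(X))$ with no congruence restriction at all. Equivalently, it studies $f_Q(x,y)=Q^2f(x/Q,y/Q)=Q(x+y)+(x-y)^2$ on the integral box $P=B_K(X)$.

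Modulo each prime $\fp_{i,j}$ above $p_i$ the linear term vanishes; note that now $f_Q(a,a)=2Qa\equiv0$. So every value is a square in $\F_{p_i}$, and the image lies in a set of residue classes modulo $Q\OK$ of density $\rho^d$, where $\rho=\prod_i\frac{p_i+1}{2p_i}$. Each odd prime thus contributes a factor of about $1/2$.

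The root-discriminant bound $D$ is a single constant for the whole tower, while $\mathcal P$ is infinite. So one can fix $t$ with $13D\rho<1$ before letting $d\to\infty$; this resolves the first obstacle you flagged. The residue-class packing bound then gives $|f_Q(P,P)|\le(13D\rho)^d|P|^2$. Since $|P|\le(2X+1)^d$, this is a power saving. With that change, the rest of your outline goes through as in the paper: the Minkowski box, lattice counting uniform in $d$, transport by one real embedding, and small doubling by taking $X$ large.
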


Since $f$ is not of additive or multiplicative Elekes--R\'onyai special form, the first part of Theorem \ref{thm:main} provides a counterexample to \eqref{ER}. 

The construction was first announced in the blog post ~\cite{PohoataERBlog} by the author on June 1st, 2026. The present paper supplies the formal presentation, along with more details and additional context. 

\medskip

\noindent\textbf{Proof overview.}
The main idea behind the proof of Theorem~\ref{thm:main} is inspired a new
combinatorial large sieve method recently developed by Croot, Mao, the author, Sheffer, and Yip in \cite{CMPSY2026}. Roughly speaking,
the results from \cite{CMPSY2026} are driven by a common local-to-global
mechanism: an algebraic congruence splits into many compatible branches modulo
many small primes, and a global bounded-multiplicity hypothesis prevents all of
these local coincidences from occurring too often.  Here we use the same local
principle in the opposite direction.  Instead of using local restrictions to
prove that a set with bounded multiplicity must be small, we build a polynomial
whose values are forced into a small collection of local residue classes.

For a number field $K$, with ring of integers $\mathcal O_K$, let $Q=p_1\cdots p_t$ be a squarefree product of odd (rational) primes $p_1,\ldots,p_t$ which split completely in $K$. Consider the polynomial
\begin{equation}
\label{eq:fQ-intro}
        f_Q(x,y)=Q(x+y)+(x-y)^2\in \mathbb Z[x,y],
\end{equation}
evaluated on algebraic integers $x,y\in \mathcal O_K$. For each prime $p_i\mid Q$, complete splitting means that, if $d=[K:\Q]$, then we have the factorization $p_i\mathcal O_K=\mathfrak p_{i,1}\cdots \mathfrak p_{i,d}$ into prime ideals $\mathfrak p_{i,j} \in \mathrm{Spec}(\mathcal O_K)$, for $j = 1,\ldots d$. Each prime ideal determines an independent residue field $\mathcal O_K/\mathfrak p_{i,j}\cong \mathbb F_{p_i}$. 

Crucially, modulo any prime ideal $\mathfrak p_{i,j}\subset \mathcal O_K$, the term
$Q(x+y)$ vanishes, and therefore $f_Q(x,y)\equiv (x-y)^2 \pmod{\mathfrak p_{i,j}}$. Thus, in each residue-field coordinate, the value of $f_Q$ is forced to lie
among the quadratic residues, including zero, a set of size $(p_i+1)/2$ in
$\mathbb F_{p_i}$. Since the ideals \(\mathfrak p_{i,1},\ldots,\mathfrak p_{i,d}\) are pairwise comaximal, the Chinese remainder theorem gives
\[
        \mathcal O_K/Q\mathcal O_K
        \cong
        \prod_{i=1}^t\prod_{j=1}^d
        \mathcal O_K/\mathfrak p_{i,j}
        \cong
        \prod_{i=1}^t \mathbb F_{p_i}^{\,d}.
\]
Consequently, it follows that the image of $f_Q$ modulo $Q\mathcal O_K$ is confined to a subset of residue density at most
\[
        \left(
        \frac{1}{Q}\prod_{i=1}^t \frac{p_i+1}{2}
        \right)^d
        = \prod_{i=1}^t
        \left(\frac{p_i+1}{2p_i}\right)^d.
\]
Since $p_i + 1 < 2p_i$ holds for each $i=1,\ldots,t$, the latter is of the form $\rho^{d}$ for some $0<\rho <1$. This engineered residue-class bottleneck is the entire local mechanism behind the construction.

For a number field \(K\), let \(Q=p_1\cdots p_t\) be a product of rational
primes which split completely in \(K\), and consider
\begin{equation}
\label{eq:fQ-intro}
        f_Q(x,y)=Q(x+y)+(x-y)^2.
\end{equation}
If \(d=[K:\Q]\), complete splitting supplies \(d\) independent
Chinese remainder theorem coordinates above each \(p_i\).  In every such coordinate,
\[
        f_Q(x,y)\equiv(x-y)^2,
\]
so the value is forced to be a square.  Consequently, the image modulo
\(Q\OK\) occupies at most a \(\rho^d\)-proportion of the residue classes, where
\[
        \rho=\prod_{i=1}^t\frac{p_i+1}{2p_i}<1.
\]
This exponential-in-degree residue-class bottleneck is the local mechanism
behind the construction.

To convert this into a counterexample for the Elekes--R\'onyai problem, we will further require some ingredients from two other recent breakthrough constructions. The first one is the striking counterexample to the Erd\H{o}s unit-distance conjecture, exhibited last month by OpenAI \cite{OpenAIUnitDistanceBlog,UnitDistanceRemarks}. The second is the subsequent work of Bloom, Sawin, Schildkraut, and Zhelezov, which, inspired by the ideas from \cite{OpenAIUnitDistanceBlog}, also managed to provide a beautiful counterexample to the sum--product conjecture~\cite{BSSZ2026} (over the reals, as well as several other well-studied variants).  

Specifically, we will take advantage of high-dimensional symmetric Minkowski boxes in rings of integers, in the same style as \cite{BSSZ2026}. These boxes also turn out to have small additive doubling. Second, we use the bounded root-discriminant tower of totally real fields together with the infinite supply of primes splitting completely in every layer, which was also the main arithmetic input behind the unit-distance construction from ~\cite{OpenAIUnitDistanceBlog}. The point in our case will be that we can use these primes to fix $Q$ once and for all, while allowing the degree $d=[K:\mathbb Q]$ to tend to infinity. The local square-class restriction accumulates as a factor $\rho^d$, with $\rho<1$, and, at the same time, bounded root discriminant gives additive boxes of size $\exp(\Theta(d))$. Thus the exponential-in-$d$ congruence saving becomes a fixed power saving in $|A|$.

\medskip

{\noindent \bf{Paper organization.}} In \Cref{sec:warm-up} we give a simpler model of the construction over $\Z$, which only gives a subpolynomial saving but displays the local-to-global congruence mechanism in its simplest form (without appeal to any algebraic number theory input or terminology). \Cref{sec:number-field} records the split-prime tower input and the two elementary lattice estimates that drive the counting argument. \Cref{sec:construction} carries out the construction for fixed $f_Q$, verifies non-specialness, and rescales to the fixed polynomial of \cref{thm:main}. Finally, in \Cref{sec:amplification} we conclude with a more in-depth discussion of the general mechanism behind the proof of Theorem \ref{thm:main}, with an eye on further applications.

\section{A finite interval warm-up}
\label{sec:warm-up}

Let $[N]=\{1,2,\dots,N\}$. Here the modulus $Q$ will depend on $N$, so this warm-up does not by itself prove \cref{thm:main}, where the polynomial must be fixed. It does, however, show
exactly why the polynomial $f_Q$ was chosen: modulo every prime divisor of $Q$,
its values are forced into the set of quadratic residues.

\begin{proposition}
\label{prop:interval-warmup}
Let $p_1,\dots,p_t$ be distinct odd primes, let $Q=p_1\cdots p_t$, and let $\rho=\frac{1}{Q} \prod_{i=1}^{t}\frac{p_i+1}{2}$. Then, for every $N\ge1$,
$$|f_Q([N],[N])| \le \rho\,(2QN+N^2+Q).$$
\end{proposition}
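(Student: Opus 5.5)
The plan is to combine a trivial archimedean bound on the size of the values of $f_Q$ with the local square-class restriction modulo each $p_i$, and then to count integers in an interval lying in a prescribed set of residue classes modulo $Q$.

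\emph{Archimedean step.} For $x,y\in[N]$ we have $x+y\in[2,2N]$ and $(x-y)^2\in[0,(N-1)^2]$. Hence every value $f_Q(x,y)=Q(x+y)+(x-y)^2$ is an integer in the interval
\[
I=[\,2Q,\ 2QN+(N-1)^2\,].
\]
The number of integers in $I$ is $L:=2QN+(N-1)^2-2Q+1$, and $L\le 2QN+N^2$.

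\emph{Local step.} Fix $p_i\mid Q$. Since $p_i\mid Q$, we have $f_Q(x,y)\equiv (x-y)^2\pmod{p_i}$. So the residue of $f_Q(x,y)$ modulo $p_i$ lies in the set $S_i\subset\F_{p_i}$ of squares, including $0$, and $|S_i|=(p_i+1)/2$ because $p_i$ is odd. By the Chinese remainder theorem, $\Z/Q\Z\cong\prod_i\F_{p_i}$, since the $p_i$ are distinct. Therefore every value of $f_Q$ on $[N]\times[N]$ lies in a set $R\subset\Z/Q\Z$ of residue classes with
\[
|R|=\prod_{i=1}^t\frac{p_i+1}{2}=\rho Q .
\]

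\emph{Counting step.} It remains to bound the number of integers in $I$ whose residue modulo $Q$ lies in $R$. Cut the $L$ consecutive integers of $I$ into $\lceil L/Q\rceil\le L/Q+1$ consecutive blocks of length $Q$, where the last block may be incomplete. Each block meets every residue class modulo $Q$ at most once, so it contains at most $|R|$ admissible integers. This gives
\[
|f_Q([N],[N])|\le \Bigl(\frac{L}{Q}+1\Bigr)\rho Q=\rho(L+Q)\le \rho\,(2QN+N^2+Q).
\]

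No step is a genuine obstacle. The only points needing care are that the count $L$ of integers in $I$ is indeed at most $2QN+N^2$ (true, since $(N-1)^2-2Q+1\le N^2$), and that the incomplete final block is absorbed by the extra $+Q$ term.
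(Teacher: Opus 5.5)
Your proof is correct and follows the same route as the paper. You confine the values to an interval of about $2QN+N^2$ integers and restrict them to $\prod_i (p_i+1)/2=\rho Q$ residue classes modulo $Q$ via the square-class restriction and the Chinese remainder theorem. You then bound the count by $\rho Q(L/Q+1)$; your block decomposition is just the dual form of the paper's observation that each residue class meets the interval at most $L/Q+1$ times.
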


\begin{proof}
For $a,b\in[N]$, the value $f_Q(a,b)=Q(a+b)+(a-b)^2$ is an integer. Moreover, $2Q\le f_Q(a,b)\le 2QN+(N-1)^2$, so all values lie in an interval of length at most $2QN+N^2$. Now fix $p_i\mid Q$. Since $Q(a+b)\equiv0\pmod{p_i}$, we have $f_Q(a,b)\equiv (a-b)^2\pmod{p_i}$. Thus modulo $p_i$ every value of $f_Q(a,b)$ lies in the set of squares in
$\mathbb F_{p_i}$, which has size $(p_i+1)/2$. By the Chinese remainder theorem,
the values of $f_Q(a,b)$ are therefore confined to at most
\[
        \prod_{i=1}^{t}\frac{p_i+1}{2}
        =
        \rho Q
\]
residue classes modulo $Q$.

If an interval has length $L$, then each residue class modulo $Q$ contributes at
most $L/Q+1$ integers to that interval. Applying this with
$L=2QN+N^2$ and with the allowed set of residues above gives
\[
        |f_Q([N],[N])|
        \le
        \rho Q\left(\frac{2QN+N^2}{Q}+1\right)
        =
        \rho(2QN+N^2+Q),
\]
as claimed.
\end{proof}

For arbitrarily large $N$, one may use the Prime Number Theorem to choose distinct primes
$p_1,\dots,p_t\in[\log N,2\log N]$ with $t=(1+o(1))\frac{\log N}{\log\log N}$, and then consider $Q=p_1\cdots p_t\le (2 \log N)^{t} \leq N$. We have
\[
        \prod_{i=1}^{t}\left(1+\frac1{p_i}\right)
        \leq \exp\!\left(\sum_{i=1}^{t}\frac1{p_i}\right)
        \leq 
\exp\!\left(\frac{t}{\log N}\right) = \exp(o(1)),
\]
therefore 
\[
        \rho
        =
        2^{-t}\prod_{i=1}^{t}\left(1+\frac1{p_i}\right)
        =
        \exp\!\left(-(\log 2)t+o(1)\right)
        =
        \exp\!\left(-\bigl(\log 2-o(1)\bigr)
        \frac{\log N}{\log\log N}\right).
\]

Since $Q\le N$, Proposition \ref{prop:interval-warmup} thereby implies the following

\begin{corollary}
\label{cor:interval-cmy}
For the choice of $Q$ above,
$$|f_Q([N],[N])| \le N^2\exp\!\left(-\bigl(\log 2-o(1)\bigr) \frac{\log N}{\log\log N}\right).$$
\end{corollary}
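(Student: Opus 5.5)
The plan is to plug the choice of $Q$ directly into \cref{prop:interval-warmup}. That gives
\[
|f_Q([N],[N])|\le \rho\,(2QN+N^2+Q).
\]
First I show that the bracket is $O(N^2)$. By construction $Q\le(2\log N)^t\le N$, since $t\log(2\log N)=(1+o(1))\frac{\log N}{\log\log N}\bigl(\log\log N+\log 2\bigr)$. Strictly this is $(1+o(1))\log N$, so I will choose $t$ slightly below $(1+o(1))\frac{\log N}{\log\log N}$ (say with a $(1-\delta_N)$ factor, $\delta_N\to0$ slowly) to guarantee $Q\le N$ without changing the asymptotics of $t$. Then $2QN+N^2+Q\le 2N^2+N^2+N\le 4N^2$.

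Next I bound $\rho$. As computed above the statement, $\rho=2^{-t}\prod_i(1+1/p_i)$. Each $p_i\ge\log N$, so the product is at most $\exp(t/\log N)=\exp(o(1))$. Hence
\[
\rho=\exp\!\left(-(\log2)t+o(1)\right)=\exp\!\left(-(\log 2-o(1))\frac{\log N}{\log\log N}\right).
\]
The existence of $t$ distinct primes in $[\log N,2\log N]$ follows from the Prime Number Theorem. That interval contains $(1+o(1))\frac{\log N}{\log\log N}$ primes, which is enough for the chosen $t$.

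Finally I absorb the constant. The factor $4=\exp(O(1))$ satisfies $O(1)=o\!\left(\frac{\log N}{\log\log N}\right)$, so it is swallowed by the $o(1)$ in the exponent. Combining the two bounds gives
\[
|f_Q([N],[N])|\le 4N^2\rho=N^2\exp\!\left(-(\log2-o(1))\frac{\log N}{\log\log N}\right).
\]
The only step needing care is the bookkeeping that makes $Q\le N$ hold while keeping $t\sim\frac{\log N}{\log\log N}$. That is a matter of choosing $t$ with a slightly smaller constant, which is harmless for the $o(1)$ in the exponent.
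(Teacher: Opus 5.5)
Your proposal is correct and is essentially the paper's argument. You apply \cref{prop:interval-warmup}, use $Q\le N$ to get $2QN+N^2+Q\le 4N^2$, bound $\rho=2^{-t}\prod_i(1+1/p_i)$ via $p_i\ge\log N$, and absorb the constant into the $o(1)$ in the exponent; your extra step of shrinking $t$ by a factor $1-\delta_N$ so that $(2\log N)^t\le N$ genuinely holds, while the Prime Number Theorem still supplies enough primes in $[\log N,2\log N]$, makes explicit a detail the paper leaves implicit.
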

In particular, if $f(x,y)=x+y+(x-y)^2$, then for any $a,b\in[N]$ we have
\[
        f_Q(a,b)=Q^2 f(a/Q,b/Q).
\]
Thus, taking $A=Q^{-1}[N]$, multiplication by the nonzero scalar $Q^{-2}$
identifies $f_Q([N],[N])$ with $f(A,A)$, so the two image sets have the same cardinality. 

For the main construction below we will adapt this strategy as follows.  Instead of letting $Q$ grow with $N$, we fix $Q$ once and for all, and create the extra independent coordinates from a different source: rather than new primes, we use
new prime \emph{ideals}. Passing to a number field $K$ of degree $d$ in which each $p_i$ splits completely replaces the single coordinate at $p_i$ by $d$ of them, one for each prime ideal above $p_i$, each imposing the same square-class restriction. Since it will possible to pick the number field $K$ with arbitrarily large degree $d$, while maintaining the root discriminant of $K$ uniformly bounded, then this will have the effect of amplifying the fixed local saving into a genuine power of the set size. 

We first record the main number theoretic input and the elementary geometry-of-numbers estimates that will allow for all this to happen in the next section. 

\section{The arithmetic input}
\label{sec:number-field}

The star of the show is the following result from Hajir, Maire, and Ramakrishna~\cite{HajirMaireRamakrishna2021}, which is also the main driving force behind the recent unit distance construction from \cite{OpenAIUnitDistanceBlog}. This result also appears as Proposition~2.3 in \cite{UnitDistanceRemarks}. The
underlying bounded root-discriminant tower technology goes back to the work of
Martinet~\cite{Martinet1978} and Hajir--Maire~\cite{HajirMaire2001}.

\begin{proposition}
\label{prop:tower}
There exist totally real number fields $K_i$ with degrees
$d_i=[K_i:\Q]\to\infty$, an absolute constant $D>0$, and an infinite set
$\mathcal P$ of odd rational primes, such that
\[
        \Disc_{K_i}^{\,1/d_i}\le D
        \quad\text{for every }i,
        \qquad\text{and every }p\in\mathcal P
        \text{ splits completely in every }K_i.
\]
\end{proposition}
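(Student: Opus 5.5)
The plan is to build the tower and the set $\mathcal P$ together by induction. At each stage we pass far enough up an infinite tamely ramified $2$-tower that there is enough Golod--Shafarevich slack to impose one more splitting condition. This is the strategy of Hajir--Maire--Ramakrishna~\cite{HajirMaireRamakrishna2021}; here is how I would organise it.

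\textbf{Setup.} Fix a totally real base field $k_0$ and a finite set $S$ of odd (hence tame for $p=2$) primes of $k_0$. For a finite set $T$ of primes of $k_0$ disjoint from $S$, let $G_S^T(k)$ denote the Galois group of the maximal pro-$2$ extension of $k$ that satisfies all of the following:
\begin{itemize}
\item it is unramified outside the primes above $S$;
\item it is totally split at the primes above $T$;
\item it is totally split at all archimedean places, which keeps everything totally real.
\end{itemize}

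\textbf{Bounded root discriminant.} Every finite layer $K$ of such an extension is tamely ramified only above $S$. Hence
\[
\Disc_K^{1/[K:\Q]} \le \Disc_{k}^{1/[k:\Q]}\prod_{\mathfrak q\in S}\Norm(\mathfrak q)^{1/[k_0:\Q]},
\]
which is a bound independent of the layer. So the root discriminant bound $D$ comes for free, as long as all fields live in a single $S$-tamely ramified tower over $k_0$.

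\textbf{Infinitude via Golod--Shafarevich.} To show the tower is infinite we use the Golod--Shafarevich criterion $r(G)<d(G)^2/4$, where $d(G)$ is the generator rank and $r(G)$ the relation rank of $G=G_S^T(k)$. Shafarevich's formulas give:
\begin{itemize}
\item $d(G_S^T(k)) \ge |S_k| - [k:\Q] - |T_k| - O(1)$;
\item $r(G) \le d(G) + [k:\Q] + |T_k| + O(1)$.
\end{itemize}
Here $S_k$ and $T_k$ are the sets of primes of $k$ above $S$ and $T$, and the $[k:\Q]$ terms come from the unit rank and the archimedean splitting conditions. Choose $k_0$ and $S$ with $|S|$ large compared to $[k_0:\Q]$; for example, take $k_0$ with many small split primes, as in Martinet's construction~\cite{Martinet1978}. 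Then the inequality holds with $T=\emptyset$, and the tower is infinite.

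\textbf{Key observation: slack grows quadratically.} Pass from $k$ to a layer $k'$ of the tower with $m=[k':k]$, where every prime of $S$ splits completely in $k'$ (arrange this by working inside a suitable subtower). Then $d$, $r$ and $|S_{k'}|$ all scale by roughly $m$. Consequently the slack $d^2/4-r$ grows like $m^2$. On the other hand, adding one new rational prime $p$ that splits completely in $k'$ adds only $m[k_0:\Q]$ primes to $T_{k'}$, so its cost to the Golod--Shafarevich inequality is linear in $m$.

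\textbf{Induction.} Suppose we have $k_n$, a finite layer of the tower for $G_S^{T_n}(k_0)$ with $T_n=\{p_1,\dots,p_n\}$.
\begin{enumerate}
\item Go up the tower for $G_S^{T_n}$ to a layer $k_{n}'$ of degree large enough that the slack exceeds the cost of one more prime.
\item By Chebotarev, choose an odd rational prime $p_{n+1}\notin S\cup T_n$ that splits completely in the Galois closure of $k_n'$.
\item The inequality still holds for $G_S^{T_{n+1}}(k_n')$, so this tower is infinite. Let $k_{n+1}$ be a nontrivial layer of it, of degree larger than that of $k_n'$.
\end{enumerate}
Set $K_i=k_i$ and $\mathcal P=\{p_1,p_2,\dots\}$.

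\textbf{Verification.} Fix $p_j\in\mathcal P$ and a field $K_i$.
\begin{itemize}
\item If $i\ge j$: $K_i$ lies in a tower in which all primes of $T_j$ are forced to split, and $p_j$ already split completely in $k_{j-1}'$, so $p_j$ splits completely in $K_i$.
\item If $i<j$: $K_i\subseteq k_{j-1}'$, in which $p_j$ splits completely by choice, so $p_j$ splits completely in $K_i$.
\end{itemize}
All the $K_i$ lie in one tamely ramified tower over $k_0$ ramified only at $S$, so the root discriminant bound holds uniformly, and the degrees $d_i\to\infty$.

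\textbf{Main obstacle.} The delicate step is the quantitative Golod--Shafarevich bookkeeping. One must check that the $[k:\Q]$-sized unit and archimedean contributions, plus the accumulated cost of all previously imposed splitting primes, are genuinely dominated by the quadratic gain. This needs the $S$-primes to split completely in the layers used, and probably a careful use of the inequality in the form $r < d^2/4$ at each finite level, rather than only at the base.
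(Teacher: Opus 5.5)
Your tame-ramification bound for the root discriminant and your final verification (the cases $i\ge j$ and $i<j$) are fine. The paper does not prove this proposition; it imports it from Hajir--Maire--Ramakrishna. The gap is in the Golod--Shafarevich step, and it is a flaw in the mechanism itself, not just in the bookkeeping.

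First, inside a tower unramified outside $S$, a layer $k'/k$ in which every prime above $S$ splits completely is unramified everywhere. Your ``suitable subtower'' is therefore a $2$-class field tower with splitting imposed at $S$ and at infinity. Nothing in your setup makes it infinite; for it, $|S_k|$ enters the generator bound with a minus sign. In the genuine $S$-ramified tower the $S$-primes ramify, so $|S_{k'}|$ need not grow proportionally to $[k':k]$. Schreier's formula bounds $d$ only from above, so nothing makes $d$ ``scale by $m$''.

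Second, grant that $S$ splits. At a layer $k'$ with $M=[k':k_0]$ in which $p_1,\dots,p_{n+1}$ split completely, each rational prime contributes $[k':\Q]$ primes to $T_{k'}$. Your own formula then gives
\[
d\bigl(G_S^{T_{n+1}}(k')\bigr)\ \ge\ M|S|-M[k_0:\Q]-(n+1)M[k_0:\Q]-O(1)=M\bigl(|S|-(n+2)[k_0:\Q]\bigr)-O(1).
\]
So a new prime does not merely subtract a linear term from a quadratic slack. It lowers by $[k_0:\Q]$ the linear coefficient of $d$ whose square is the slack. Once $(n+2)[k_0:\Q]\ge |S|$ there is no slack at any height. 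With $k_0$ and $S$ fixed, as they must be to keep $D$ fixed, your induction yields only boundedly many primes, while the statement needs $\mathcal P$ infinite. Also, choosing $p_{n+1}$ merely to split completely in $k_n'$ gives relators that need not lie in the Frattini subgroup, so they genuinely cost generators.

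The missing idea is to pay for the splitting conditions once, at the fixed base $k_0$, where a rational prime splitting completely in $k_0$ costs only $[k_0:\Q]$ relators, and to make those relators deep.
\begin{enumerate}
\item Start from $G=G_S(k_0)$ (totally real version) with $r<d^2/4$, so that $1-d\,t_0+r\,t_0^2=-\eta<0$ for some $t_0\in(0,1)$.
\item Having chosen $p_1,\dots,p_{j-1}$, let $G_{j-1}$ be the quotient of $G$ in which they split completely. By Chebotarev, pick $p_j$ splitting completely in the Galois closure over $\Q$ of the fixed field of the Zassenhaus subgroup $(G_{j-1})_{(n_j)}$.
\item The Frobenius elements above $p_j$ then lie in $(G_{j-1})_{(n_j)}$, which is the image of $F_{(n_j)}$. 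Hence they lift to relators of degree at least $n_j$ in the free pro-$2$ group $F$.
\item The Golod--Shafarevich--Vinberg criterion
\[
1-d\,t_0+r\,t_0^2+[k_0:\Q]\sum_{j\ge1}t_0^{\,n_j}<0
\]
holds once the $n_j$ grow fast enough that $[k_0:\Q]\sum_j t_0^{n_j}<\eta$. So the maximal quotient in which every $p\in\mathcal P$ splits completely is still infinite.
\item An increasing chain of its finite layers gives the $K_i$, and your tame-ramification estimate supplies the uniform bound $D$.
\end{enumerate}
This weighted, deep-relator form of Golod--Shafarevich is the ``cutting towers'' argument of Hajir--Maire--Ramakrishna. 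It is what must replace your quadratic-slack heuristic.
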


Here $\Disc_K$ denotes the absolute value of the discriminant of the number field $K$.  The
condition $\Disc_K^{1/[K:\Q]}\le D$ says that the root discriminant is bounded
uniformly along the tower.  This keeps the Minkowski lattice of $\OK$ from becoming too sparse as the degree grows. 

For every number field $K$ in the tower from Proposition \ref{prop:tower}, recall that a prime $p \in \mathcal P$ completely splits in $K$ if there exist distinct prime ideals $\fp_1,\ldots \fp_d$ in the ring of integers $\OK$, such that $p\OK=\fp_1\cdots\fp_d$, where $d=[K:\Q]$ denotes the degree of $K$ over $\mathbb{Q}$. The important point is that these primes $p \in \mathcal P$ split in {\it{every}} layer of the tower.

For the rest of the section, fix a totally real field $K$ of degree $d$, with real
embeddings $\sigma_1,\ldots,\sigma_d:K\hookrightarrow\R$. For $X\ge1$, define the symmetric Minkowski box of radius $X$ to be the set
\[
        B_K(X)=
        \{\alpha\in\OK:\ |\sigma_i(\alpha)|\le X
        \text{ for all }1\le i\le d\}.
\]
Under the Minkowski embedding
\[
        \alpha\longmapsto
        (\sigma_1(\alpha),\ldots,\sigma_d(\alpha)) \subset \mathbb{R}^{d},
\]
the ring of integers $\OK$ is a full-rank lattice in $\R^d$ of covolume $\Disc_K^{1/2}$; see, for example, \cite[Chapter V, Lemma 2]{LangANT}. 

We write $\Norm_{K/\mathbb Q}$ for the field norm.  Thus, for
$\alpha\in K$,
\[
        \Norm_{K/\mathbb Q}(\alpha)
        =
        \prod_{\sigma:K\hookrightarrow\mathbb C}\sigma(\alpha).
\]
Since $K$ is totally real, its embeddings are precisely
$\sigma_1,\ldots,\sigma_d:K\hookrightarrow\mathbb R$, and so in the present
setting this becomes $\Norm_{K/\mathbb Q}(\alpha) = \prod_{i=1}^d \sigma_i(\alpha)$. Equivalently, $\Norm_{K/\mathbb Q}(\alpha)$ is the determinant of the
$\mathbb Q$-linear multiplication map $m_\alpha:K\to K$ defined by $m_{\alpha}(z) = \alpha z$. So, for example, if $\alpha\in\mathcal O_K$, then
$\Norm_{K/\mathbb Q}(\alpha)\in\mathbb Z$, because multiplication by $\alpha$
preserves the lattice $\mathcal O_K$.  In particular, a nonzero algebraic integer has nonzero integral norm, and therefore $|\Norm_{K/\mathbb Q}(\alpha)|\ge1$. 

We shall use the following elementary separation observation twice.  If
$\alpha,\beta\in\OK$ are distinct and $\alpha\equiv\beta\pmod{R\OK}$ for some rational integer $R\ge1$, then $\alpha-\beta=R\gamma$ for a nonzero $\gamma\in\OK$.  Since $\gamma$ is a nonzero algebraic integer, by the discussion above we have that
\[
        1\le |\Norm_{K/\Q}(\gamma)| = \prod_{i=1}^d |\sigma_i(\gamma)|.
\]
Hence $|\sigma_i(\gamma)|\ge1$ for at least one embedding $i$, and therefore $|\sigma_i(\alpha-\beta)|\ge R$. Thus two distinct algebraic integers in the same residue class modulo $R\OK$ are $R$-separated in the $\ell^\infty$ metric after the Minkowski embedding. We are now ready to record our geometry-of-number estimates. 

The first result will allow us to establish the additive-box estimate, which we advertised already at the end of Section \ref{sec:intro}.

\begin{lemma}
\label{lem:box-count}
For every totally real field $K$ of degree $d$ and every $X\ge1$,
\[
        X^{d}\Disc_K^{-1/2}
        \le
        |B_K(X)|
        \le
        (2X+1)^{d}.
\]
\end{lemma}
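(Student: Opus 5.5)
The two inequalities have different sources. The upper bound is a packing statement that follows from the separation observation with $R=1$. The lower bound is a covering (averaging) argument: translates of the box $[-X,X]^d$ are averaged against the lattice $\OK$, whose covolume is $\Disc_K^{1/2}$ under the Minkowski embedding.

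\emph{Upper bound.} Take $R=1$ in the separation observation. Any two distinct elements of $\OK$ are at $\ell^\infty$-distance at least $1$ after the Minkowski embedding. Place around each $\alpha\in B_K(X)$ the half-open cube $\sigma(\alpha)+[-\tfrac12,\tfrac12)^d$. By the separation, these cubes are pairwise disjoint, and each lies inside $[-X-\tfrac12,X+\tfrac12]^d$. Comparing volumes gives $|B_K(X)|\le (2X+1)^d$.

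\emph{Lower bound.} Let $F$ be a fundamental domain for the lattice $\Lambda=\sigma(\OK)$, so $\mathrm{vol}(F)=\Disc_K^{1/2}$. Consider the function $v\mapsto|(\Lambda+v)\cap C|$ on $F$, where $C=[-X,X]^d$. Integrating it over $F$ gives
\[
\int_F|(\Lambda+v)\cap C|\,dv=\mathrm{vol}(C)=(2X)^d .
\]
Hence some translate satisfies $|(\Lambda+v)\cap C|\ge (2X)^d\Disc_K^{-1/2}$. The difference of any two points of $(\Lambda+v)\cap C$ lies in $\Lambda\cap[-2X,2X]^d$. Fixing one point $w$ of the translate and subtracting it from the others gives an injection into $B_K(2X)$. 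So $|B_K(2X)|\ge (2X)^d\Disc_K^{-1/2}$, and replacing $2X$ by $X$ yields
\[
|B_K(X)|\ge X^d\Disc_K^{-1/2}\qquad\text{for }X\ge2 .
\]

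\emph{Main obstacle.} The difficulty is the range $1\le X<2$, where the substitution $X\mapsto 2X$ is not available. I would avoid the substitution altogether. Apply the averaging to the half-size cube $C'=[-X/2,X/2]^d$ instead. This gives a translate with $|(\Lambda+v)\cap C'|\ge X^d\Disc_K^{-1/2}$. Differences of points in $C'$ lie in $[-X,X]^d$, so the same injection lands directly in $B_K(X)$, giving the bound for every $X>0$, and in particular for all $X\ge1$. This is Blichfeldt's principle in the standard form, and it avoids any case split.
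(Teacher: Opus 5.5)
Your proof is correct and is essentially the paper's: the upper bound is the same half-open unit-cube packing built on the $R=1$ separation observation, and your final lower-bound argument is the paper's application of Blichfeldt's lemma to $[-X/2,X/2]^d$, followed by subtracting a fixed point (you prove Blichfeldt by averaging over a fundamental domain instead of citing it). The ``obstacle'' for $1\le X<2$ was never real, since your averaging over $[-X,X]^d$ works for every $X>0$, so replacing $X$ by $X/2$ is always available and no case split is needed.
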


This is precisely \cite[Lemma 3.3]{BSSZ2026}, however, for the reader's convenience, let us include the short proof here as well. The more interesting part is the lower bound, which is an application of Blichfeldt's lemma from \cite{{Blichfeldt1914}}. We use
Blichfeldt's lemma in the standard lattice form: if $\Lambda\subset\R^d$ is a lattice of covolume $\det\Lambda$ and $S\subset\R^d$ is bounded and measurable, then some translate $z+S$ contains at least $\operatorname{vol}(S)/\det\Lambda$ points of $\Lambda$.  See, for instance,
Cassels~\cite[Chapter~III, Section~2]{Cassels1997} and the references therein.

\begin{proof}[Proof of Lemma \ref{lem:box-count}]
Let $\Lambda$ be the Minkowski lattice of $\OK$ in $\R^d$. For the upper bound, apply the separation observation with $R=1$.  Distinct points of $\Lambda\cap[-X,X]^d$ are $1$-separated in $\ell^\infty$.  Placing
half-open cubes of side length $1$ centered at these points gives disjoint cubes,
all contained in $[-X-\tfrac12,X+\tfrac12]^d$. Comparing volumes gives $|B_K(X)|\le (2X+1)^d$.

For the lower bound, apply Blichfeldt's lemma in the form recalled above to $S=[-X/2,X/2]^d$. Since $\operatorname{vol}(S)=X^d$ and the covolume of the Minkowski lattice is
$\Disc_K^{1/2}$, some translate $z+S$ contains at least
$X^d\Disc_K^{-1/2}$ lattice points.  Choose one of them, say $v_0$, and
subtract it from all the others.  If $v,v_0\in z+S$, then $v-v_0\in S-S=[-X,X]^d$. Because the Minkowski lattice is an additive group, these differences are again
lattice points; translating back, they are distinct algebraic integers whose
conjugates all have absolute value at most $X$.  Hence they lie in $B_K(X)$,
and $|B_K(X)|\ge X^d\Disc_K^{-1/2}$.
\end{proof}

The second estimate is the same packing argument from the upper bound in Lemma \ref{lem:box-count}, but carried out separately in each residue class. We record this separately mostly for reference purposes. 

\begin{lemma}
\label{lem:residue-count}
Let $R\ge1$ be a rational integer, let $\Omega\subseteq\OK/R\OK$ be a set of
residue classes, and let $M\ge1$.  Then
\[
        \#\{\alpha\in\OK:
        |\sigma_i(\alpha)|\le M\ \ \text{for every}\ i,\ \  
        \alpha\bmod R\OK\in\Omega\}
        \le
        |\Omega|\left(\frac{2M}{R}+1\right)^d.
\]
\end{lemma}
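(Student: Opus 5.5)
The plan is to split the count by residue class and bound each class with the packing argument from the upper bound in \cref{lem:box-count}. Since $\Omega$ is a finite set of classes (as $\OK/R\OK$ is finite), the quantity on the left is $\sum_{\omega\in\Omega}N_\omega$, where $N_\omega$ counts the $\alpha\in\OK$ with $|\sigma_i(\alpha)|\le M$ for all $i$ and $\alpha\bmod R\OK=\omega$. It therefore suffices to show
\[
        N_\omega\le\left(\frac{2M}{R}+1\right)^d
\]
for each single class $\omega$, and then sum over $\Omega$.

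Fix $\omega$ and let $S_\omega$ be the image of the counted elements under the Minkowski embedding. This embedding is injective on $K$, so $N_\omega=|S_\omega|$. Any two distinct elements $\alpha\ne\beta$ in the class $\omega$ satisfy $\alpha\equiv\beta\pmod{R\OK}$. By the separation observation preceding \cref{lem:box-count}, some embedding satisfies $|\sigma_i(\alpha-\beta)|\ge R$, so the points of $S_\omega$ are pairwise $R$-separated in $\ell^\infty$. Moreover, $S_\omega\subseteq[-M,M]^d$.

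Next, place around each point of $S_\omega$ the half-open cube of side length $R$ centred at that point. Take these as products of intervals $[v_i-R/2,v_i+R/2)$. If two such cubes met, the corresponding points would differ by less than $R$ in every coordinate, contradicting the separation. So the cubes are pairwise disjoint, and all of them lie inside $[-M-R/2,M+R/2]^d$. Comparing volumes gives
\[
        N_\omega R^d\le(2M+R)^d,
\]
which is exactly $N_\omega\le(2M/R+1)^d$. Summing over $\omega\in\Omega$ then yields the lemma.

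There is no serious obstacle. The only points needing care are that the separation observation uses $\alpha\ne\beta$ together with $|\Norm_{K/\Q}(\gamma)|\ge1$, and that the half-open convention makes the cubes genuinely disjoint rather than merely overlapping on boundaries. The hypothesis $M\ge1$ is not needed for the argument.
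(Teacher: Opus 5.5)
Your proposal is correct and is essentially the paper's own proof. Like the paper, you reduce to a single residue class, use the separation observation to get $R$-separation in $\ell^\infty$, pack disjoint half-open cubes of side $R$ inside $[-M-R/2,M+R/2]^d$, and multiply by $|\Omega|$. Your remark that $M\ge1$ is not needed is also accurate.
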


\begin{proof}[Proof of Lemma \ref{lem:residue-count}]
It is enough to count points in one residue class modulo $R\OK$.  By the separation observation above, distinct points in a fixed residue class are $R$-separated in the $\ell^\infty$ metric after the Minkowski embedding. Now restrict to the cube $[-M,M]^d$.  Around each point in the fixed residue
class place a half-open cube of side length $R$ centered at that point.  These
small cubes are disjoint, and they all lie in the enlarged cube $[-M-R/2,M+R/2]^d$. Therefore the number of points in this residue class is at most
\[
        \frac{(2M+R)^d}{R^d}
        =
        \left(\frac{2M}{R}+1\right)^d.
\]
Multiplying by the number $|\Omega|$ of allowed residue classes proves the lemma.
\end{proof}

\section{Proof of Theorem \ref{thm:main}}
\label{sec:construction}

Select distinct primes $p_1,\dots,p_t$ from the split set $\mathcal P$ of Proposition~\ref{prop:tower}, set $Q=p_1\cdots p_t$, and let $f_Q$ be the polynomial
$$f_Q(x,y)=Q(x+y)+(x-y)^2\in \mathbb Z[x,y],$$
as in \eqref{eq:fQ-intro}. Write
\[
        \rho=\prod_{i=1}^{t}\frac{p_i+1}{2p_i}
\]
for the product of the local densities of the squares in the residue field $\mathbb F_{p_i}$. Since each $p_i$ is odd, each factor is at most $2/3$,
and hence $\rho$ can be made arbitrarily small by taking enough primes from
$\mathcal P$.  We choose $p_1,\ldots,p_t$ once and for all so that $\theta:=13D\rho<1$, where $D$ is the root-discriminant constant from Proposition~\ref{prop:tower}. From this point on, $Q$, $\rho$, and $\theta$ are fixed. We isolate the following main claim. 

\begin{proposition}
\label{prop:fQ}
For the squarefree integer $Q$ chosen above, there exist an absolute constant $c>0$ and
arbitrarily large finite sets $A_0\subset\mathbb R$ such that
\[
        |f_Q(A_0,A_0)|\le |A_0|^{2-c}.
\]
Moreover, for every fixed $\varepsilon>0$, the sets may be chosen so that $|A_0+A_0|\le |A_0|^{1+\varepsilon}$ and still $|f_Q(A_0,A_0)|\le |A_0|^{2-c_\varepsilon}$ for some constant $c_\varepsilon>0$.
\end{proposition}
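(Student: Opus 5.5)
The plan is to realize $A_0$ as a real embedding of a Minkowski box in a deep layer of the tower. Fix a radius $X\ge \max(Q,D)$, to be specified later, and take a field $K=K_i$ from \cref{prop:tower} of large degree $d$. Set
\[
A_0=\sigma_1\bigl(B_K(X)\bigr)\subset\R .
\]
Since $\sigma_1$ is an injective ring homomorphism, $|A_0|=|B_K(X)|$ and $f_Q(A_0,A_0)=\sigma_1\bigl(f_Q(B_K(X),B_K(X))\bigr)$. So it suffices to count the set $V=f_Q(B_K(X),B_K(X))\subset\OK$. By \cref{lem:box-count} and $\Disc_K^{1/d}\le D$,
\[
|A_0|\ge (X/\sqrt D)^d,
\]
which tends to infinity with $d$ because $X>\sqrt D$.

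Next I bound $|V|$ using the two constraints on $\gamma=f_Q(\alpha,\beta)$.

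\emph{Archimedean.} For every $i$,
\[
|\sigma_i(\gamma)|\le 2QX+4X^2=:M .
\]

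\emph{Local.} Modulo each $\fp_{i,j}$ we have $\gamma\equiv(\alpha-\beta)^2$, so by the CRT decomposition $\OK/Q\OK\cong\prod_i\F_{p_i}^{d}$ the residue of $\gamma$ lies in a set $\Omega$ with
\[
|\Omega|=\prod_i\Bigl(\tfrac{p_i+1}{2}\Bigr)^d=\rho^dQ^d .
\]
\cref{lem:residue-count} with $R=Q$ then gives
\[
|V|\le \rho^dQ^d\Bigl(\tfrac{2M}{Q}+1\Bigr)^d=\rho^d(2M+Q)^d .
\]
With $X\ge Q$ we get $2M+Q=4QX+8X^2+Q\le 13X^2$, hence
\[
|V|\le (13\rho X^2)^d .
\]
This is where the choice $\theta=13D\rho<1$ is designed to enter.

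For the exponent comparison, write $|A_0|\ge (X/\sqrt D)^d$. We want
\[
(13\rho X^2)^d\le (X/\sqrt D)^{(2-c)d},
\]
which is equivalent to
\[
13\rho D\le \bigl(\sqrt D/X\bigr)^{c}\,D^{0}\cdot\ldots
\]
More cleanly, after taking logarithms the condition reads
\[
\log\theta\le -c\log\bigl(X/\sqrt D\bigr).
\]
So it holds with
\[
c=\frac{\log(1/\theta)}{\log(X/\sqrt D)}>0 .
\]
Choosing $X=\max(Q,D)$ gives an absolute constant $c$ (recall $Q$, $D$, $\theta$ are fixed). Letting $d\to\infty$ along the tower gives arbitrarily large $A_0$. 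If the crude inequality $|A_0|\le|B_K(X)|$ goes the wrong way, I will use the lower bound on $|A_0|$ only, which is the direction actually needed.

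For the doubling statement, note that $A_0+A_0\subset\sigma_1(B_K(2X))$. By \cref{lem:box-count},
\[
|A_0+A_0|\le (4X+1)^d=|A_0|^{\lambda},\qquad \lambda\le \frac{\log(4X+1)}{\log(X/\sqrt D)} .
\]
This exponent tends to $1$ as $X\to\infty$. Given $\eps$, I fix $X=X(\eps)$ large enough that $\lambda\le 1+\eps$. The image bound above holds for every $X\ge\max(Q,D)$, now with
\[
c_\eps=\frac{\log(1/\theta)}{\log(X(\eps)/\sqrt D)}>0 .
\]
The main point to get right is the bookkeeping in the two estimates above: the bound $2M+Q\le 13X^2$ has to absorb the loss from $Q$ in the residue count, and $X$ must be chosen relative to $Q$ and $D$ so that the factor $D^{1/2}$ lost per coordinate in \cref{lem:box-count} is exactly compensated by $\theta<1$. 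Once that inequality is arranged, the rest is routine.
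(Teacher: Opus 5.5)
Your proposal is correct and follows the paper's proof essentially step for step: the same Minkowski box $B_K(X)$ embedded via $\sigma_1$, the same archimedean bound, the same $(\rho Q)^d$ residue count fed into Lemma~\ref{lem:residue-count}, and the same absorption into $(13\rho X^2)^d$. The only difference is the final exponent step, where you compare directly with the lower bound $|A_0|\ge (X/\sqrt D)^d$ to get $c=\log(1/\theta)/\log(X/\sqrt D)$, whereas the paper writes $|f_Q(P,P)|\le\theta^d n^2$ and converts $\theta^d$ into $n^{-c}$ via the upper bound $n\le(2X+1)^d$; your version tacitly needs $2-c\ge 0$, which does hold because $13\rho X^2>1$.
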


\begin{proof}
Let $K$ be a field in the tower of Proposition~\ref{prop:tower}, of degree
$d=[K:\mathbb Q]$, so that $\Disc_K^{1/d}\le D$. Choose a real embedding $\sigma_1:K\hookrightarrow\mathbb R$, fix a real
number $X\ge \max\{Q,2D^{1/2},1\}$, and let $P=B_K(X)$ be the symmetric Minkowski box of radius $X$. Finally, set $A_0=\sigma_1(P)\subset\mathbb R$.

Since $\sigma_1$ is one-to-one on $K$, we have $n:=|A_0|=|P|$. By Lemma~\ref{lem:box-count},
\begin{equation}
\label{eq:sizeP}
        \left(\frac{X}{D^{1/2}}\right)^d
        \le n
        \le (2X+1)^d.
\end{equation}
In particular $n\to\infty$ as $d\to\infty$ along the tower.  Thus the sets
constructed below have arbitrarily large cardinality.

We now estimate $f_Q(P,P)\subset\mathcal O_K$. First, if $a,b\in P$, then for every
embedding $\sigma_i$,
\[
|\sigma_i(f_Q(a,b))| \le Q|\sigma_i(a+b)|+|\sigma_i(a-b)|^2 \le 2QX+4X^2 \le 6X^2,
\]
where we used $X\ge Q$.  Hence we have the inclusion
\begin{equation}
\label{eq:archimedean-fQ}
        f_Q(P,P)\subseteq B_K(6X^2).
\end{equation}

Second, recall that the values of $f_{Q}$ occupy few residue classes modulo $Q\mathcal O_K$. We execute the idea from the end of Section \ref{sec:intro}: each prime $p_i$ splits completely in $K$, so we may write
\[
        p_i\mathcal O_K=\prod_{j=1}^d \mathfrak p_{i,j},
        \qquad \text{where}\ \
        \mathcal O_K/\mathfrak p_{i,j}\cong\mathbb F_{p_i}.
\]
For $a,b\in P$, reduction modulo $\mathfrak p_{i,j}$ kills the linear term, due to the fact that $p_i\mid Q$, i.e.
\[
        f_Q(a,b)
        =
        Q(a+b)+(a-b)^2
        \equiv
        (a-b)^2
        \pmod{\mathfrak p_{i,j}}.
\]
In particular, in each residue field $\mathcal O_K/\mathfrak p_{i,j}\cong\mathbb F_{p_i}$,
the value of $f_Q(a,b)$ is forced to be a square.  The set of squares in
$\mathbb F_{p_i}$, including $0$, has size $(p_i+1)/2$.  By the Chinese
remainder theorem,
\[
        \mathcal O_K/Q\mathcal O_K
        \cong
        \prod_{i=1}^t\prod_{j=1}^d
        \mathcal O_K/\mathfrak p_{i,j}
        \cong
        \prod_{i=1}^t \mathbb F_{p_i}^{\,d}.
\]
Consequently the residues of $f_Q(a,b)$ modulo $Q\mathcal O_K$ are confined
to a set $\Omega\subseteq\mathcal O_K/Q\mathcal O_K$ of size at most
\begin{equation}
\label{eq:Omega-size}
        |\Omega|
        \le
        \prod_{i=1}^t
        \left(\frac{p_i+1}{2}\right)^d
        =
        (\rho Q)^d.
\end{equation}
Combining \eqref{eq:archimedean-fQ}, \eqref{eq:Omega-size}, and Lemma~\ref{lem:residue-count} with
$R=Q$ and $M=6X^2$, we get
\[
|f_Q(P,P)| \le |\Omega| \left(\frac{12X^2}{Q}+1\right)^d\le (\rho Q)^d \left(\frac{12X^2}{Q}+1\right)^d = \bigl(\rho(12X^2+Q)\bigr)^d.
\]
Since $Q \leq X^2$, the latter is $\le (13\rho X^2)^d$. On the other hand, the lower bound from \eqref{eq:sizeP} gives $n^2\ge \left(\frac{X^2}{D}\right)^d$, or equivalently $(X^2)^d\le D^d n^2$. Hence, using the definition of $\theta$, we can conclude that
\begin{equation}
\label{eq:theta-bound-main}
        |f_Q(P,P)|
        \le
        (13D\rho)^d n^2
        =
        \theta^d n^2.
\end{equation}
This is the key inequality: the residue restrictions give an exponentially small
factor $\theta^d$, in front of the trivial bound $n^2$. Since $\theta<1$, and since $n\le (2X+1)^d$, we have $\theta^d \le n^{-c}$, where $c=\frac{-\log\theta}{\log(2X+1)}>0$. Therefore $|f_Q(P,P)|\le n^{2-c}$. Finally, because $f_Q$ has rational coefficients and $\sigma_1$ is injective,
the map $\sigma_1$ identifies $f_Q(P,P)$ with $f_Q(A_0,A_0)$.  Thus $|f_Q(A_0,A_0)|=|f_Q(P,P)|\le |A_0|^{2-c}$.

It remains to record the small-doubling refinement.  Since $P+P\subseteq B_K(2X)$, Lemma~\ref{lem:box-count} gives
\[
        |A_0+A_0|
        =
        |P+P|
        \le
        |B_K(2X)|
        \le
        (4X+1)^d.
\]
Given $\varepsilon>0$, choose $X$, after $Q$ has been fixed, so large that $4X+1 \le \left(\frac{X}{D^{1/2}}\right)^{1+\varepsilon}$. This is possible because the left-hand side grows linearly in $X$, while the
right-hand side grows like $X^{1+\varepsilon}$.  Then the lower bound in
\eqref{eq:sizeP} gives $|A_0+A_0| \le |A_0|^{1+\varepsilon}$, as claimed. Increasing $X$ may decrease the exponent $c$, but once $X$ is fixed the
exponent remains a positive constant $c_\varepsilon>0$.
\end{proof}

\subsection*{Removing the coefficient \texorpdfstring{$Q$}{Q}}

After Corollary \ref{cor:interval-cmy}, we already saw that the auxiliary coefficient $Q$ is only a device for imposing congruence
restrictions. Here too it immediately disappears by considering the same simple scaling. 

Let $f(x,y)=x+y+(x-y)^2$. Then
\begin{equation}
\label{eq:scaling-fQ}
        f_Q(Qx,Qy)
        =
        Q^2\bigl(x+y+(x-y)^2\bigr)
        =
        Q^2 f(x,y).
\end{equation}
Therefore, if $A_0\subset\mathbb R$ is any finite set and $A=Q^{-1}A_0$, then $f(A,A)=Q^{-2}f_Q(A_0,A_0)$. Multiplication by the nonzero scalar $Q^{-2}$ does not change cardinality, so
\begin{equation}
\label{eq:scaling-cardinality}
        |f(A,A)|=|f_Q(A_0,A_0)|.
\end{equation}
Also $|A|=|A_0|$, and additive doubling is preserved: $|A+A|=|A_0+A_0|$. Thus Proposition~\ref{prop:fQ} immediately gives arbitrarily large sets
$A\subset\mathbb R$ such that $|f(A,A)|\le |A|^{2-c}$, with the same small-doubling refinement.  

\subsection*{Non-special verification} Last but not least, we include a routine verification that the fixed polynomial $f$ is genuinely non-special. We isolate this as a claim below.

\begin{claim}
\label{lem:non-special}
The polynomial
\[
        f(x,y)=x+y+(x-y)^2 \in \mathbb{R}[x,y]
\]
is neither additive nor multiplicative.
\end{claim}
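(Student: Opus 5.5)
The plan is the standard derivative test for the Elekes--R\'onyai special forms: both forms force the ratio $f_x/f_y$ to separate as a product of a function of $x$ and a function of $y$, and for our $f$ this ratio depends only on $x-y$ in a non-exponential way.

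First the reduction. Suppose $f(x,y)=g(u(x)+v(y))$ with univariate polynomials $g,u,v$. Since $f$ depends genuinely on both variables, $g$, $u$ and $v$ are nonconstant. Hence $g'$, $u'$, $v'$ have finitely many zeros, and on a nonempty open set $U\subset\R^2$ none of $g'(u+v)$, $u'(x)$, $v'(y)$ vanishes. Differentiating gives
\[
\frac{f_x}{f_y}=\frac{g'(u+v)\,u'(x)}{g'(u+v)\,v'(y)}=\frac{u'(x)}{v'(y)}
\quad\text{on } U .
\]
Similarly, if $f=g(u(x)v(y))$, we shrink $U$ so that additionally $u,v\neq0$ there, and obtain
\[
\frac{f_x}{f_y}=\frac{u'(x)\,v(y)}{u(x)\,v'(y)} .
\]
In both cases, on an open set where the ratio is nonzero (shrink $U$ again, possibly replacing the ratio by its absolute value), $\log|f_x/f_y|=\phi(x)+\psi(y)$. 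Therefore
\[
\partial_x\partial_y \log\left|\frac{f_x}{f_y}\right| \equiv 0 \quad\text{on } U .
\]

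Next the computation for our polynomial. Here $f_x=1+2(x-y)$ and $f_y=1-2(x-y)$, so with $s=x-y$ we have $\log|f_x/f_y|=h(s)$, where $h(s)=\log|1+2s|-\log|1-2s|$. Since $\partial_x\partial_y\, h(x-y)=-h''(x-y)$, and
\[
h'(s)=\frac{4}{1-4s^2}, \qquad h''(s)=\frac{32s}{(1-4s^2)^2},
\]
this mixed derivative vanishes only on the line $x=y$. That line cannot contain a nonempty open set $U$, which contradicts both forms at once.

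The only delicate point is making the reduction rigorous: choosing $U$ so that all denominators and logarithms are defined, and justifying that $g,u,v$ are nonconstant (if $u$ were constant, $f$ would not depend on $x$). This is routine. Since the argument is local and analytic, it also covers the broader versions of the special forms where $g,u,v$ are allowed to be rational or analytic functions.
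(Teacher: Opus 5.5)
Your proof is correct, but it takes a different route from the paper's.

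The paper argues purely algebraically, by degree bookkeeping and homogeneous parts.
In the additive case, the nonzero $xy$-coefficient forces $\deg a=2$ and $\deg b=\deg c=1$. The quadratic part $\alpha(ux+vy)^2$ must then be proportional to $(x-y)^2$, which makes the linear part proportional to $x-y$ rather than $x+y$.
In the multiplicative case, $\deg a\cdot(\deg b+\deg c)=2$ forces everything to be linear, so the quadratic part would be a multiple of $xy$.

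You instead use the classical local criterion $\partial_x\partial_y\log|f_x/f_y|\equiv 0$. This treats both special forms at once, since the multiplicative form is locally additive after taking logarithms. The obstruction becomes the explicit quantity $-h''(x-y)=-32(x-y)/\bigl(1-4(x-y)^2\bigr)^2$, which is nonzero off the diagonal. Your computations of $f_x$, $f_y$, $h'$, $h''$ are right, and so is your choice of $U$; the key point there is that $g'(u(x)+v(y))\not\equiv 0$ because $u(x)+v(y)$ is nonconstant.

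What each approach buys:
The paper's argument is shorter for this particular quadratic and needs no analysis. However, it uses in an essential way that $a,b,c$ are polynomials.
Yours costs the routine step of choosing an open set that avoids the zero loci. In exchange it is more robust: it rules out any local representation $f=G(\Phi(x)+\Psi(y))$ with smooth $G,\Phi,\Psi$. That covers rational or analytic $g,u,v$, and, via $\arctan$ for real $u,v$, even the rational Elekes--R\'onyai form $g\bigl((u(x)+v(y))/(1-u(x)v(y))\bigr)$.
Also, since $|zw|=|z|\,|w|$, your computation on real $(x,y)$ works verbatim when $g,u,v$ have complex coefficients. This recovers the paper's remark that no representation exists even over $\mathbb{C}$.
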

In fact, no such representation exists even over $\mathbb C$:
there do not exist $a,b,c\in\mathbb C[t]$ such that $f(x,y)=a\bigl(b(x)+c(y)\bigr)$ or $f(x,y)=a\bigl(b(x)c(y)\bigr)$. The point is that $f$ has a deliberately incompatible pair of fingerprints: its quadratic part points in the $x-y$ direction, while its linear part points in the transverse $x+y$ direction. We record a formal proof below, supplied by ChatGPT. 

\begin{proof}
Suppose that $f(x,y)=a\bigl(b(x)+c(y)\bigr)$ holds for some $a,b,c\in\mathbb C[t]$.  Since $f$ depends genuinely on both
variables, both $b$ and $c$ are nonconstant.  If $a$ were linear, then the
right-hand side would be a sum of a function of $x$ and a function of $y$, and
so would have no $xy$-term.  This is impossible, since $f(x,y)=x+y+x^2-2xy+y^2$ has a nonzero mixed term.  Thus $a$ is nonlinear. 

Because $\deg f=2$, the only remaining possibility is to have $\deg a=2$ and $\deg b=\deg c=1$. Write $a(t)=\alpha t^2+\beta t+\gamma$, with $\alpha\ne0$,
and $b(x)+c(y)=ux+vy+w$ with $u,v\ne0$. The quadratic homogeneous part of $a(b(x)+c(y))$ is then $\alpha(ux+vy)^2$. But the quadratic homogeneous part of $f$ is $(x-y)^2$. Hence the linear form $ux+vy$ must be proportional to $x-y$.  Therefore the
linear part of $a(ux+vy+w)$, namely $(2\alpha w+\beta)(ux+vy)$,
is also proportional to $x-y$.  This contradicts the fact that the linear part of $f$ is $x+y$, which is not proportional to $x-y$.  Thus $f$ is not additively special.

Now suppose that $f(x,y)=a\bigl(b(x)c(y)\bigr)$. Again $b$ and $c$ must be nonconstant.  Let $m=\deg a$, $r=\deg b$, $s=\deg c$. The highest-degree term of $a(b(x)c(y))$ has total degree $m(r+s)$, with nonzero leading monomial proportional to $x^{mr}y^{ms}$.  Since
$\deg f=2$ and $r,s\ge1$, we must have $m=1$ and $r=s=1$. Thus $a,b,c$ are all linear, and the quadratic homogeneous part of
$a(b(x)c(y))$ is a scalar multiple of $xy$.  But the quadratic homogeneous part of $f$ is $x^2-2xy+y^2$, which has nonzero $x^2$ and $y^2$ terms.  This is also impossible.
\end{proof}

\section{Concluding remarks}
\label{sec:amplification}

The proof above may be summarized in one sentence: a fixed local defect is replicated until it becomes macroscopic.  For every prime \(p_i\mid Q\), one has $f_Q(a,b)\equiv(a-b)^2\pmod{p_i}$. Thus the value of \(f_Q\) is confined to the squares in \(\mathbb F_{p_i}\), a set
of density $\delta_i=\frac{p_i+1}{2p_i}<1$. A single prime supplies only a fixed saving.  The decisive issue is how this one restriction gets replicated into many independent copies. 

There are two natural directions of amplification.  In the prime aspect, or \emph{horizontally}, one remains entirely over \(\mathbb Q\) and introduces as many rational primes \(p_i\) as the ambient scale permits.  The Chinese remainder theorem then converts a fixed local gain at one prime into a product gain over a squarefree modulus.  Since a modulus $Q$ of size at most $N$ can contain roughly \(\log N/\log\log N\) primes of logarithmic size, this mechanism naturally produces savings of the form $\exp\!\left(-c\frac{\log N}{\log\log N}\right)$. In the degree aspect, or \emph{vertically}, one fixes the rational primes and passes instead to number fields in which each of them splits into many prime ideals. 

\begin{figure}[h]
\centering
\begin{tikzpicture}[
    >=Latex,
    cell/.style={
        draw=black!40,
        rounded corners=1.5pt,
        minimum width=0.82cm,
        minimum height=0.62cm,
        fill=black!4,
        inner sep=0pt,
        font=\scriptsize
    },
    ghost/.style={
        draw=black!22,
        densely dashed,
        rounded corners=1.5pt,
        minimum width=0.82cm,
        minimum height=0.62cm,
        fill=none,
        inner sep=0pt,
        font=\scriptsize,
        text=black!40
    },
    amp/.style={
        ->,
        RoyalBlue,
        line width=1.1pt
    },
    panellabel/.style={
        font=\bfseries\small,
        text=RoyalBlue
    },
    modulus/.style={
        font=\small
    }
]

\begin{scope}[shift={(0,0)}]
    \foreach \c/\lab in {
        0/{\F_{p_1}},
        1/{\F_{p_2}},
        2/{\F_{p_3}}
    }{
        \node[cell] (h-\c) at (\c*0.95,0.45) {$\lab$};
    }

    \node[ghost] (h-3) at (3*0.95,0.45) {$\cdots$};
    \node[cell]  (h-4) at (4*0.95,0.45) {$\F_{p_t}$};

    \draw[amp]
        ($(h-0.south west)+(0,-0.55)$)
        --
        ($(h-4.south east)+(0,-0.55)$)
        node[midway,below=2pt,font=\small,text=RoyalBlue]
        {add primes: \(t\rightarrow\)};

    \node[panellabel] at (1.9*0.95,2.0)
        {Horizontal amplification};

    \node[modulus] at (1.9*0.95,1.5)
        {$\Z/Q\Z\cong\prod_{i=1}^{t}\F_{p_i}$};
\end{scope}

\begin{scope}[shift={(7.55,0)}]

    \node[panellabel] at (1.9,2.0)
        {Vertical amplification};

    \node[modulus] at (1.9,1.5)
        {$\OK/Q\OK\cong\prod_{i=1}^{t}\F_{p_i}^{\,d}$};

    \foreach \c/\lab in {
        0/{p_1},
        1/{p_2},
        2/{p_3},
        4/{p_t}
    }{
        \node[cell]  (v-\c-0) at (\c*0.95,-1.45) {$\F_{\lab}$};
        \node[cell]  (v-\c-1) at (\c*0.95,-0.73) {$\F_{\lab}$};
        \node[ghost] (v-\c-2) at (\c*0.95,-0.01) {$\vdots$};
        \node[cell]  (v-\c-3) at (\c*0.95, 0.71) {$\F_{\lab}$};

        \node[modulus] at (\c*0.95,-2.02) {$\lab$};
    }

    \node[modulus] at (3*0.95,-0.37) {$\cdots$};
    \node[modulus] at (3*0.95,-2.02) {$\cdots$};

    \draw[amp]
        (-0.78,-1.80)
        --
        (-0.78,1.02)
        node[midway,above=4pt,sloped,font=\small,text=RoyalBlue]
        {climb the tower: \(d\rightarrow\)};
\end{scope}

\draw[black!18,line width=0.6pt]
    (5.65,-2.35) -- (5.65,2.25);

\end{tikzpicture}

\caption{Horizontal and vertical amplification of the same local defect.}
\label{fig:horizontal-vertical}
\end{figure}

The horizontal principle on its own can sometimes be surprisingly powerful. It was recently introduced and developed more systematically in the recent work of Croot, Mao, the author, Sheffer, and Yip~\cite{CMPSY2026}, where it became a new combinatorial large sieve for sets with bounded algebraic multiplicities. The local input in \cite{CMPSY2026} varies from problem to problem. Squares occupy only about half of the residue classes modulo an odd prime; the zero set of a binary quadratic form splits into isotropic lines modulo primes for which its discriminant is a square; and a norm form factors into linear forms modulo rational primes which split completely in the underlying number field. Passing to squarefree products packages these local restrictions into many compatible Chinese remainder theorem branches. If the set is then well distributed modulo the chosen squarefree modulus, then the proliferation of local branches forces many pairs or tuples satisfying the relevant congruence. A Sidon-type or bounded-representation hypothesis, however, places an upper bound on the number of such coincidences. 

Overall, these ideas from \cite{CMPSY2026} led to wide range of new results. For example, every Sidon subset of \(\{1^2,\ldots,N^2\}\) has size at most $N\exp\!\left(-c\frac{\log N}{\log\log N}\right)$. This gave the first super-polylogarithmic saving for a classical problem of Alon and Erd\H{o}s \cite{AlonErdos85}. Improving also on a classical estimate of Erd\H{o}s and Guy \cite{ErdosGuy70} from 1970, an appropriate two-dimensional version of this argument then showed that a subset of the grid \([N]^2\) with no repeated distance has size at most $N\exp\!\left(-c\frac{\log N}{\log\log N}\right)$. Furthermore, the same method gave new upper bounds for subsets of \([N]^2\) containing no isosceles triangle, another old problem of Erd\H{o}s, which was also recently popularized by \cite{PatternBoost}. Croot-Mao-Pohoata-Sheffer-Yip then also further developed an entropic version of this horizontal combinatorial sieve to further establish results for \(B_2[g]\)-sets in the squares. Perhaps most notably, using these additional entropy ideas, this method also yielded the first nontrivial bounds for \(B_3[g]\)-sets in the cubes and \(B_4[g]\)-sets in the fourth powers. Such problems are closely related to various classical questions in analytic number theory, see for example the survey of Maynard \cite{M26}.

Now, at the other extreme, for (vertical) amplification in the degree aspect, one makes a completely orthogonal move. Rather than staying within $\mathbb{Q}$, the number-field construction from Section \ref{sec:construction} moves instead in the ``vertical" direction. The primes
\(p_1,\ldots,p_t\), and hence the squarefree integer \(Q=p_1 \ldots p_t\), are fixed once and for all, and one instead starts climbing a tower of number fields in which each \(p_i\) splits completely.  In a degree-\(d\) field \(K\),
\[
        p_i\mathcal O_K
        =
        \prod_{j=1}^d\mathfrak p_{i,j},
        \qquad
        \mathcal O_K/\mathfrak p_{i,j}
        \cong
        \mathbb F_{p_i},
\]
and consequently
\[
        \mathcal O_K/Q\mathcal O_K
        \cong
        \prod_{i=1}^t\mathbb F_{p_i}^{\,d}.
\]
Each prime that supplied one coordinate over \(\Q\) now supplies \(d\)
independent ones, so the warm-up density $\rho=\delta_1 \ldots \delta_t$ becomes \(\rho^d\), as recorded in \eqref{eq:Omega-size}. The bounded root discriminant then supplies the complementary conversation of scales. For fixed \(X\), the Minkowski boxes satisfy
\(|B_K(X)|=\exp(\Theta(d))\).  After the archimedean packing losses, the
residue saving appears in \eqref{eq:theta-bound-main} as
\[
        |f_Q(P,P)|\le\theta^d|P|^2,\ \ \text{where}\ \ \theta=13D\rho<1.
\]
Since \(|P|=\exp(\Theta(d))\), the factor \(\theta^d\) is a fixed negative power of \(|P|\), thereby leading to a genuine power saving.

\medskip
{\noindent \bf{Further applications.}}
We expect that further combinations of the ideas above will similarly upgrade Ramanujan-type savings to genuine power savings in other extremal problems in combinatorics or number theory. 

For example, in a forthcoming joint work \cite{LeePohoataZhu}, as a further proof of concept, we will record how a vertical amplification of the Croot-Mao-Pohoata-Sheffer-Yip distance sieve from the proof of \cite[Theorem 1.5]{CMPSY2026} can also be used to construct a simpler and more robust counterexample for the unit distance conjecture, in the following sense. 

\begin{theorem} There exists an absolute constant $c>0$ and a set of $n$ points $P \subset \mathbb{R}^{2}$ for which the following statements (simultaneously) hold:
\begin{enumerate}
    \item some distance determined by $P$ occurs \(\geq n^{1+c}\) times,
    \item every subset of $P$ of size \(\geq n^{1/2-c}\) determines a repeated distance,
    \item every subset of $P$ of size \(\geq n^{1-c}\) contains an isosceles triangle.
\end{enumerate}
\end{theorem}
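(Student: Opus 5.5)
The plan is to run the construction of \Cref{sec:construction} with the split-prime square-class restriction replaced by the distance restriction from the sieve of \cite[Theorem 1.5]{CMPSY2026}. The same vertical amplification then applies.

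\textbf{Setup.} Take a field $K$ from the tower of Proposition~\ref{prop:tower} and set $L=K(i)$, a CM field of degree $2d$. Its root discriminant is still bounded, say by $D'$, because adjoining $i$ only ramifies at $2$. Fix once and for all primes $p_1,\dots,p_t\in\mathcal P$ with $p_j\equiv1\pmod 4$, and put $Q=p_1\cdots p_t$. I would have to check that $\mathcal P$ can be taken to contain infinitely many such primes, or else modify the tower so that it does. Each such $p_j$ splits completely in $K$ and in $\Q(i)$, hence in $L$. Fix a complex embedding $\tau$ of $L$. Let $P$ be the set of $\alpha\in\mathcal O_L$ all of whose conjugates have modulus at most $X$, embedded in $\R^2\cong\mathbb C$ via $\tau$. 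The analogue of Lemma~\ref{lem:box-count} gives $n=|P|\ge (cX^2/D')^{d}$. For $a,b\in P$, the squared distance $|\tau(a)-\tau(b)|^2$ is the image of the relative norm $\Norm_{L/K}(a-b)\in\mathcal O_K$. Its real conjugates are all at most $4X^2$, so all squared distances lie in $B_K(4X^2)$, and Lemma~\ref{lem:box-count} bounds the number of these by $(9X^2)^d$.

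\textbf{Part (1).} Pigeonholing the $n^2$ ordered pairs among at most $(9X^2)^d$ values gives one distance occurring at least
\[
n^2/(9X^2)^d\ \ge\ n\,(c/9D')^{d}X^{0}\cdot(X^{2d}/\text{const}^d)\ \ge\ n^{1+c'}
\]
times once $X$ is a large constant. The bounded root discriminant is what makes this a power saving, exactly as in \eqref{eq:theta-bound-main}.

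\textbf{Parts (2) and (3): the local input.} These need the sieve, vertically amplified. Modulo each of the $2d$ primes $\mathfrak P$ of $L$ above $p_j$, we have $\mathcal O_L/\mathfrak P\cong\F_{p_j}$. Over the $d$ primes $\fp$ of $K$ above $p_j$, the relative norm factors as $z\bar z$, a product of two linear forms on $\mathcal O_L/\fp\mathcal O_L\cong\F_{p_j}^2$. So the condition $\Norm_{L/K}(a-b)\equiv0\pmod{Q\mathcal O_K}$ is the union, over CRT branches, of $2^{td}$ choices of one isotropic line per coordinate.

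\textbf{Part (2): counting coincidences.} Let $S\subseteq P$ have all distances distinct. Partition $\mathcal O_L$ according to the residue of $a$ along one chosen branch. There are about $Q^{d}$ classes, namely one coordinate per prime $\fp$ after quotienting by the isotropic line. By Cauchy--Schwarz, at least $|S|^2/Q^{d}-|S|$ pairs $(a,b)$ satisfy $Q\mathcal O_K\mid\Norm_{L/K}(a-b)$. Summing over the $2^{td}$ branches, with inclusion--exclusion on overlaps as in \cite{CMPSY2026}, should give about $|S|^2(2/Q)^{d}$ such pairs, up to $\rho$-type factors. Their squared distances lie in $Q\mathcal O_K\cap B_K(4X^2)$. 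By Lemma~\ref{lem:residue-count} that set has size at most $(8X^2/Q+1)^d$. Distinctness of distances then forces
\[
|S|^2(2/Q)^d\lesssim (8X^2/Q+1)^d ,\qquad\text{so}\qquad |S|\lesssim (5X^2/2)^{d/2}\cdot C^{d}.
\]
Compared with $n\approx X^{2d}$, this is $n^{1/4+o(1)}$ or at worst $n^{1/2-c}$ after fixing $Q$ large and then $X$ large. A better count may be possible, but $n^{1/2-c}$ is all that is claimed.

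\textbf{Part (3): isosceles triangles.} The same scheme should work with triples $(a,b,c)$ satisfying $\Norm_{L/K}(a-b)\equiv\Norm_{L/K}(a-c)\pmod{Q\mathcal O_K}$. Locally this is the quadric $z\bar z=w\bar w$, which again has an inflated number of solutions per coordinate. In an isosceles-free set, for each apex $a$ the $|S|$ distances from $a$ are distinct. I would combine this with a second-moment count over residue classes and the archimedean bound from Lemma~\ref{lem:residue-count}, aiming for $|S|\le n^{1-c}$.

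\textbf{Main obstacle.} I expect the hardest step to be the sieve lower bound on the number of congruent pairs or triples, uniformly in $d$. The branches across the $td$ coordinates overlap, at the zero vector and along intersections of the isotropic lines. I would need the gain per coordinate to be a fixed factor $\delta_j>1$ over the random count, so that it multiplies to $\prod_j\delta_j^{d}$ and beats the $13D$-type archimedean packing losses. My first attempt would be to work directly in $\prod_{\mathfrak P}\F_{p_j}$ and use the exact local count of solutions to $z\bar z=0$ (respectively $z\bar z=w\bar w$), which is $2p_j-1$ out of $p_j^2$ (respectively about $p_j^3$ plus a lower-order excess). Then apply Cauchy--Schwarz once globally rather than branch by branch, and choose $Q$ so that the product of the local excesses exceeds $D'$, before letting $d\to\infty$.
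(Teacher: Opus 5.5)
The paper does not prove this theorem. It is only announced, as a vertical amplification of the CMPSY distance sieve, so I am judging your sketch on its own. Your set-up is the natural one: $L=K(i)$, squared distances equal to $\Norm_{L/K}(a-b)\in\mathcal O_K$, and two isotropic branches above each split $\mathfrak p$. There are, however, genuine gaps.

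\textbf{Part (1) fails as written.} $X$ does not survive in your display. The packing argument of Lemma~\ref{lem:box-count}, with discs of radius $\tfrac12$ in each complex coordinate, gives $n\le(2X+1)^{2d}$. Hence
\[
\frac{n^2}{(9X^2)^d}\le n\Bigl(\frac{2X+1}{3X}\Bigr)^{2d}\le n ,
\]
so pigeonholing over all of $B_K(4X^2)$ produces no gain at all. Both $n$ and the number $(4X^2+1)^d$ of admissible squared distances are $X^{2d}$ up to factors $C^{\pm d}$. Bounded root discriminant only guarantees $n=e^{\Theta(d)}$; exactly as in \eqref{eq:theta-bound-main}, the saving has to come from the split primes. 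Likewise, without the sieve, (2) and (3) only give $|S|\le n^{1/2}C^{d}$ and $|S|\le n$.

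\textbf{The sieve inequality is not carried out.} You correctly flag it as the main obstacle, but the bookkeeping you wrote is also off. Summing the $2^{td}$ branches gives $|S|^2(2^t/Q)^d$, not $|S|^2(2/Q)^d$. Also $|S|\lesssim (CX^2)^{d/2}$ is $n^{1/2}$ times an exponential factor, not $n^{1/4}$.

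No inclusion--exclusion is needed. Let $I$ run over the $2^{td}$ ideals obtained by choosing $\mathfrak P$ or $\bar{\mathfrak P}$ above each $\mathfrak p\mid Q\mathcal O_K$, so $|\mathcal O_L/I|=Q^d$. Set $k(a,b)=\#\{\mathfrak p: \mathfrak p\mathcal O_L\mid a-b\}$. A pair $a\neq b$ with $Q\mathcal O_K\mid\Norm_{L/K}(a-b)$ satisfies $a\equiv b\pmod I$ for exactly $2^{k(a,b)}$ of these $I$, and for none otherwise. Cauchy--Schwarz in each branch then gives
\[
\sum_{\substack{a\neq b\in S\\ Q\mathcal O_K\mid \Norm_{L/K}(a-b)}}2^{k(a,b)}\ \ge\ 2^{td}\Bigl(\frac{|S|^2}{Q^d}-|S|\Bigr).
\]

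On the value side, $k(a,b)\le k(v):=\#\{\mathfrak p:\mathfrak p^2\mid v\}$ for $v=\Norm_{L/K}(a-b)$. Sum $2^{k(v)}$ over nonzero $v\in Q\mathcal O_K$ with all conjugates in $[0,4X^2]$. This equals $\sum_T\#\bigl(J_T\cap(0,4X^2]^d\bigr)$, where $J_T=Q\prod_{\mathfrak p\in T}\mathfrak p$. Distinct points of $J_T$ differ in some coordinate by at least $\Norm(J_T)^{1/d}$. Packing therefore bounds the sum by $2^{td}Q^{-d}(4X^2\rho')^d$, where $\rho=\prod_i\frac{p_i+1}{2p_i}$ is the paper's $\rho$ and $\rho'=\rho(1+Q^2/4X^2)$.

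All three parts follow from this one estimate:
\begin{itemize}
\item For (1), take $S=P$: some distance has multiplicity at least $(n^2-nQ^d)/(4X^2\rho')^d$.
\item For (2), distinct distances give $|S|^2\le |S|Q^d+2(4X^2\rho')^d$.
\item For (3), in an isosceles-free set the map $b\mapsto\Norm_{L/K}(a-b)$ is injective for each apex $a$, giving $|S|\le Q^d+(4X^2\rho')^d$.
\end{itemize}
With $n\ge(c_0X^2/D')^d$, take $t$ so that $4\rho D'/c_0<1$, then take $X\gg Q$.

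\textbf{Your triple route for (3) is too weak.} For fixed $a$ and $b,c$ uniform modulo $\mathfrak p\mathcal O_L$,
\[
\Pr[\Norm(a-b)\equiv\Norm(a-c)\bmod\mathfrak p]=\tfrac1p\bigl(1+\tfrac1p-\tfrac1{p^2}\bigr).
\]
That is an excess of only $1+O(1/p)$ per coordinate. It beats $D'$ only if $\sum1/p$ over the usable primes is large, which is not guaranteed. You also give no lower bound for non-equidistributed $S$.

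\textbf{The condition $p_i\equiv1\pmod 4$ is essential, not cosmetic.} If $\mathfrak p$ is inert in $L/K$, the pairs with $\mathfrak p\mid\Norm_{L/K}(a-b)$ have density $1/p^2$, and their norms lie in $\mathfrak p^2$. So the local gain is exactly $1$. Proposition~\ref{prop:tower} as stated does not supply such primes. You need a form of the tower input in which the chosen primes split completely in $L$. Adjoining $\sqrt{-q}$ with $q$ tailored to $Q$ does not fix this: it raises the root discriminant by about $\sqrt q$, which can swamp $\rho$.
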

The existence of a set of points with any single one of the last two features alone is new, with each instance giving polynomially improved estimates for old problems of Erd\H{o}s. In fact, the existence of a set solely satisfying property (3) already confirms a conjecture of Erd\H{o}s from 1980. See for example \cite[page 110]{EG80} or \cite{BloomErdos1207}. We hope that the list of new ``horizontal" results from \cite{CMPSY2026} will inspire other such constructions.

\medskip
\section*{Acknowledgments}
Part of this research was conduced while the author was attending the conference ``Combinatorics and Geometry in Mytilene", held in Greece from May 25th to May 29th. The author would like to thank the organizers (Karim Adiprasito, Enis Kaya, Evrydiki Nestoridi, Stavros Papakadis, Vasiliki Petrotou and Christos Tatakis) for providing wonderful working conditions. The author would also like thank Thomas Bloom, Ernie Croot, Junzhe Mao, Oliver Roche-Newton, Will Sawin, Adam Sheffer, Jozsef Solymosi, Kyle Yip, and Daniel Zhu for helpful discussions.

We would also like to acknowledge the usage of AI in preparing and proofreading this manuscript. All the mathematical ideas in this work are human generated.

\end{document}